\documentclass[11pt]{article}
\usepackage[T1]{fontenc}
\usepackage{amsmath,amssymb,amsthm,mathtools}
\usepackage[margin=1.1in]{geometry}
\usepackage{microtype}
\usepackage[hidelinks]{hyperref}

\theoremstyle{plain}
\newtheorem{theorem}{Theorem}
\newtheorem{lemma}[theorem]{Lemma}
\newtheorem{proposition}[theorem]{Proposition}

\theoremstyle{definition}
\newtheorem{definition}[theorem]{Definition}
\newtheorem{remark}[theorem]{Remark}
\numberwithin{theorem}{section}

\newcommand{\F}{\mathbb{F}}
\newcommand{\inner}[2]{\langle #1,#2\rangle}

\newcommand{\abs}[1]{\lvert #1\rvert}
\newcommand{\half}{\tfrac12}
\newcommand{\twr}{\operatorname{twr}}
\newcommand{\ldr}{\operatorname{ldr}}
\newcommand{\ldrs}{\operatorname{ldr}^{*}}
\newcommand{\reg}{\operatorname{reg}}
\newcommand{\codim}{\operatorname{codim}}
\newcommand{\Xii}{\Xi}
\newcommand{\Stable}{\mathcal{S}}
\newcommand{\peridx}{\operatorname{per}}
\newcommand{\econst}{\mathrm e}

\title{Robust order properties in the HLM tower construction for arithmetic regularity over \texorpdfstring{$\F_2^{\,n}$}{F2^n}}
\author{Dean Menezes\thanks{The University of Texas at Austin, Austin, Texas 78712, USA.\\
Email: \href{mailto:dean.menezes@utexas.edu}{dean.menezes@utexas.edu}.}}
\date{}
\hypersetup{
  pdftitle={Robust order properties in the HLM tower construction for arithmetic regularity over F2 vector spaces},
  pdfauthor={Dean Menezes},
  pdfsubject={Robust order properties in arithmetic regularity constructions},
  pdfkeywords={arithmetic regularity, order property, half-graph, stability, edit distance, VC dimension}
}
\begin{document}
\maketitle

\begin{abstract}
Green's arithmetic regularity lemma over $G=\F_2^{\,n}$ has tower-type
lower bounds.  We show that the order property in the
Hosseini--Lovett--Moshkovitz--Shapira construction is robust under
adversarial edits.  Write
$f=s^{-1}\sum_{i=1}^{s}\mathbf 1_{A_i}$, where
$s=\lfloor1/(16\epsilon)\rfloor$, and let $d_s$ be the top block
dimension.  A ladder of length $r$ in a set $A$ is a pattern
$a_p+b_q\in A$ exactly when $p\le q$.  For every HLM steering system
satisfying the construction's spanning conditions, changing $A_s$ on at
most $\abs G/16$ points leaves a ladder of length $d_s/8$.  The proof
views the $2^{d_s}$ row traces as a binary code of length $8d_s$ and
relative distance greater than $1/4$, then applies Sauer's lemma.  We also
choose the top steering map so that a superlevel set of $f$ retains a
ladder of length at least $d_s/(8\sqrt{s})\ge\twr(s-2)$ after every edit on
at most $\epsilon\abs G$ points.  Finally, robust ladders obstruct
low-index periodic approximation, and sufficiently costly Green
regularity forces positive edit distance from every fixed stable class.
\end{abstract}

\medskip
\noindent\textbf{2020 Mathematics Subject Classification.}
Primary 11B30; Secondary 03C45, 05C35.

\noindent\textbf{Keywords.}
Arithmetic regularity lemma; order property; half-graph; stable arithmetic
regularity; tower lower bound; edit distance; VC dimension; finite vector spaces.

\section{Introduction}\label{sec:intro}

Green's arithmetic regularity lemma says that a function on
$\F_2^{\,n}$ becomes Fourier-uniform on almost all cosets of a subspace
whose codimension depends only on the accuracy parameter~\cite{G}.  The
best general bound has tower growth.  Hosseini, Lovett, Moshkovitz, and
Shapira (HLM) proved that this growth is necessary: for
$s=\lfloor1/(16\epsilon)\rfloor$ they constructed an average
\[
  f=\frac1s\sum_{i=1}^{s}\mathbf 1_{A_i}
\]
for which the only $\epsilon$-regular subspace is the zero subspace, and
the ambient dimension is at least $\twr(s-1)$~\cite{HLM}.

Stable arithmetic regularity suggests a combinatorial explanation for
this cost.  Terry and Wolf proved that, for each fixed $k$, a $k$-stable
subset of $\F_p^{\,n}$ has a polynomial-codimension regularity theorem
with no exceptional cosets~\cite{TW}.  Thus a tower-forcing family cannot
remain $k$-stable for any fixed $k$.  They asked for an explicit,
quantitative order-property configuration in the Green--HLM examples.

We answer that question and prove a stronger robustness statement.  The
top HLM level set $A_s$ contains a ladder of length $d_s$, where $d_s$ is
the top block dimension and is a tower in $1/\epsilon$.  More
significantly, changing a fixed positive proportion of the set does not
remove all long ladders: every edit of density at most $1/16$ leaves one
of length $d_s/8$.  This conclusion uses all rows of the construction,
not one protected configuration.  Their traces form an error-correcting
code, and Sauer's lemma prevents a stable approximation from supporting
enough distinct traces.

The tower lower bound concerns the averaged function $f$, rather than one
level predicate.  We therefore also choose the top steering map so that a
binary superlevel set of $f$ has edit-robust ladder length at least
$d_s/(8\sqrt{s})$.  This quantity is still at least $\twr(s-2)$.  The
robust order property is consequently visible both in the top component
and in a threshold of the actual HLM witness.

We now state the terminology precisely.

\begin{definition}[ladder, order property, and stability]\label{def:op}
A set $A\subseteq G$ has the \emph{$k$-order property} if there are
$a_1,\dots,a_k,b_1,\dots,b_k\in G$ such that
\[
  a_p+b_q\in A\quad\Longleftrightarrow\quad p\le q
  \qquad(1\le p,q\le k).
\]
Such a configuration is a \emph{ladder of length $k$}.  Equivalently, in
the bipartite relation
\[
  R_A(x,y)\quad\Longleftrightarrow\quad x+y\in A
\]
on two labelled copies of $G$, these vertices induce a half-graph of
height $k$.  The set $A$ is \emph{$k$-stable} if it contains no ladder of
length $k$.

The \emph{ladder index} $\ldr(A)$ is the largest $k$ for which $A$
contains a ladder of length $k$; set $\ldr(\emptyset)=0$.  Thus $A$ is
$k$-stable exactly when $\ldr(A)\le k-1$.
\end{definition}

\begin{remark}\label{rem:ladder-bookkeeping}
The two vertex families in a ladder are automatically injective, because
their membership rows and columns are distinct.  Hence
$\ldr(A)\le\abs G$.  Restricting a ladder to its first $k-1$ indices gives
a ladder of length $k-1$, so $k$-stability implies $k'$-stability for every
$k'\ge k$.
\end{remark}

\begin{definition}[robust ladder index and edit distance]\label{def:robust-index}
For $\eta\in[0,1]$, define
\[
  \ldrs_\eta(A):=
  \min_{\,B\subseteq G,\ \abs B\le\eta\abs G}
  \ldr(A\mathbin\triangle B).
\]
Thus $\ldrs_\eta(A)$ is the largest ladder length that survives every
symmetric-difference edit on at most $\eta\abs G$ points.

Let $\Stable_k(G)$ be the family of $k$-stable subsets of $G$.  For every
nonempty family $\mathcal C\subseteq\mathcal P(G)$, put
\[
  \operatorname{dist}_G(A,\mathcal C)
  :=\min_{C\in\mathcal C}\frac{\abs{A\mathbin\triangle C}}{\abs G}.
\]
\end{definition}

\begin{proposition}[edit-distance interpretation]\label{prop:distance}
For every $A\subseteq G$, every $k\ge1$, and every $\eta\in[0,1]$,
\[
  \ldrs_\eta(A)\ge k
  \quad\Longleftrightarrow\quad
  \operatorname{dist}_G\bigl(A,\Stable_k(G)\bigr)>\eta.
\]
\end{proposition}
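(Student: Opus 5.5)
The plan is to unwind both definitions and observe that they describe the same finite set of edits. An edit set $B$ and an approximating set $C$ determine each other through $C=A\mathbin\triangle B$, equivalently $B=A\mathbin\triangle C$. Under this correspondence $\abs{A\mathbin\triangle C}=\abs B$. By Definition~\ref{def:op}, $C$ is $k$-stable exactly when $\ldr(C)\le k-1$.

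First I would prove the contrapositive of ($\Rightarrow$). Suppose $\operatorname{dist}_G(A,\Stable_k(G))\le\eta$. The family $\Stable_k(G)$ is nonempty, since $\emptyset$ has ladder index $0\le k-1$ because $k\ge1$; it is also finite, so the minimum is attained at some $C\in\Stable_k(G)$ with $\abs{A\mathbin\triangle C}\le\eta\abs G$. Put $B:=A\mathbin\triangle C$. Then $\abs B\le\eta\abs G$ and $A\mathbin\triangle B=C$, so
\[
  \ldrs_\eta(A)\le\ldr(C)\le k-1,
\]
which means $\ldrs_\eta(A)<k$.

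Next I would prove the contrapositive of ($\Leftarrow$). Suppose $\ldrs_\eta(A)\le k-1$. The minimum defining $\ldrs_\eta$ is over a finite nonempty family, since $B=\emptyset$ is admissible, so it is attained at some $B$ with $\abs B\le\eta\abs G$ and $\ldr(A\mathbin\triangle B)\le k-1$. Set $C:=A\mathbin\triangle B$. Then $C\in\Stable_k(G)$ and $\abs{A\mathbin\triangle C}=\abs B\le\eta\abs G$, so $\operatorname{dist}_G(A,\Stable_k(G))\le\eta$.

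There is no real obstacle in this argument. The only points needing care are the attainment of the minima, which is automatic because $G$ is finite and both families are nonempty, and the use of $k\ge1$ to ensure $\emptyset\in\Stable_k(G)$ so that $\operatorname{dist}_G$ is defined. It is also worth checking that the inequalities have the right strictness: ``$\ge k$'' on the left corresponds to ``$>\eta$'' on the right precisely because both conditions are negations of the non-strict statements ``$\le k-1$'' and ``$\le\eta$'' in the two contrapositives.
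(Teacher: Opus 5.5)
Your proof is correct and takes the same approach as the paper. Both unwind the definitions through the correspondence $C=A\mathbin\triangle B$, using that $C$ is $k$-stable exactly when $\ldr(C)\le k-1$. Your version also makes explicit the attainment of the minima and the nonemptiness of $\Stable_k(G)$ (via $\emptyset$ and $k\ge1$), which the paper's two-line argument leaves implicit.
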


\begin{proof}
The inequality $\ldrs_\eta(A)\ge k$ says that every set $C$ with
$\abs{A\mathbin\triangle C}\le\eta\abs G$ contains a ladder of length
$k$.  Equivalently, no $k$-stable set lies within edit distance $\eta$ of
$A$.
\end{proof}

\paragraph{Results.}
Let $d=d_s$.  The four parts of the paper are as follows.
\begin{enumerate}
\item Theorem~\ref{thm:levelset} gives an explicit ladder of length $d$ in
  $A_s$.  Here
  \[
    d\ge\twr(s-1)
    \qquad\text{and}\qquad
    d=(1-o(1))\log_2\abs G.
  \]
  This answers the question posed by Terry and Wolf.
\item Theorem~\ref{thm:robust} proves the stronger estimate
  \[
    \operatorname{dist}_G\bigl(A_s,\Stable_{d/8}(G)\bigr)
    >\frac18\left(1-\left(\frac{\econst}{4}\right)^{d/8}\right)
    >\frac1{16}.
  \]
  Hence every edit of density at most $1/16$ leaves a ladder of length
  $d/8$.  In particular, the robust ladder length is a tower, not merely
  logarithmic in $1/\epsilon$.
\item Theorem~\ref{thm:witness} chooses the top steering map so that a
  superlevel set $S=\{f\ge\theta\}$ satisfies
  \[
    \ldrs_\epsilon(S)\ge\frac{d}{8\sqrt{s}}\ge\twr(s-2).
  \]
  Thus a binary threshold of the averaged witness remains tower-unstable
  after every $\epsilon$-sparse edit.
\item Section~\ref{sec:consequences} gives two general consequences.
  Robust ladders obstruct approximation by low-index unions of cosets.
  Conversely, stable coset-approximation theorems imply that a sufficiently
  large Green regularity index forces positive edit distance from every
  fixed stable class.
\end{enumerate}

The explicit ladder uses \emph{support separation}: one family occupies the
blocks preceding a level, and the other occupies that level.  The nonlinear
steering map then receives an argument from only one family, so membership
reduces to a bilinear pairing.  The robust result has a different source.
The HLM spanning condition makes the row traces a binary code of relative
distance greater than $1/4$; a stable relation has few traces by Sauer's
lemma.  The disagreement between those two counts forces many edits.

\paragraph{Related notions.}
Several quantitative notions make stability insensitive to a negligible
error or to a small density of witnesses.  Terry and Wolf considered
quantitative relaxations in stable arithmetic regularity~\cite[\S5]{TW};
Coregliano and Malliaris introduced an approximate notion in countable
Ramsey theory~\cite[Definition~3.5]{CM}; Martin-Pizarro, Palac\'in, and Wolf
defined almost sure stability by requiring a negligible density of
half-graphs~\cite{MPPW}; and Gir\'on developed regularity theory for
relations stable modulo an ideal~\cite{Giron}.  Gladkova has recently
obtained other arithmetic-regularity lower bounds from the Green--HLM
constructions~\cite{Gladkova}.

Our parameter instead measures normalized adversarial edit distance from
the class of $k$-stable \emph{Cayley} relations.  An edit preserves the
Cayley form: toggling $z\in A$ toggles exactly the pairs $(x,y)$ with
$x+y=z$.  Proposition~\ref{prop:distance} gives the exact edit-distance
interpretation.  Up to reversing the order convention, the unedited
ladder index is the threshold dimension of the row family in
learning-theoretic terminology~\cite{GGKM}.  We do not claim that edit
distance characterizes Green's regularity index; the general implication
proved in Section~\ref{sec:consequences} is one-way.

\paragraph{Conventions.}
The group $G=\F_2^{\,n}$ is written additively, with
$\inner{u}{v}=\sum_j u_jv_j\in\F_2$.  For a statement $P$, write
$[P]\in\{0,1\}$ for its indicator.  We set $\twr(0)=1$ and
$\twr(r+1)=2^{\twr(r)}$.  All asymptotic notation refers to
$\epsilon\to0$.  Throughout the paper we assume $\epsilon\le1/256$ and put
\[
  s:=\left\lfloor\frac1{16\epsilon}\right\rfloor\ge16.
\]
The definition gives
\begin{equation}\label{eq:epsilon-s}
  \frac1{16(s+1)}<\epsilon\le\frac1{16s}.
\end{equation}

\section{The HLM construction}\label{sec:hlm}

We first fix the normalization of Green regularity used throughout the
paper.

\begin{definition}[$\epsilon$-regular subspace and regularity index]\label{def:reg}
Let $H\le G$.  For $x,t\in G$, put
\[
  \widehat f_{x+H}(t):=\frac1{\abs H}
  \sum_{h\in H}f(x+h)(-1)^{\inner{t}{h}}.
\]
The coset $x+H$ is \emph{$\epsilon$-uniform} for $f$ if
$\abs{\widehat f_{x+H}(t)}\le\epsilon$ for every $t\notin H^\perp$, where
\[
  H^\perp:=\{y\in G:\inner{y}{h}=0\text{ for every }h\in H\}.
\]
The subspace $H$ is \emph{$\epsilon$-regular} for $f$ if all but at most
$\epsilon\abs G$ points $x\in G$ lie in $\epsilon$-uniform cosets.  Finally,
\[
  \reg_\epsilon(f):=
  \min\{2^{\codim H}:H\le G\text{ is $\epsilon$-regular for }f\}.
\]
For a set $A\subseteq G$, we apply the definition to $f=\mathbf 1_A$.
\end{definition}

We now record the HLM construction and prove the regularity statement used
later.

Define block dimensions $d_1=1$ and
\[
  d_{i+1}=2^{D_i}\ \ (i=1,2),\qquad d_{i+1}=2^{D_i-3}\ \ (i\ge3),
\]
where $D_i:=\sum_{j\le i}d_j$ is the running total ($D_0=0$).  Thus
\[
  d_1,\dots,d_5=1,2,8,2^8,2^{264},
  \qquad D_1,D_2,D_3=1,3,11.
\]
The sequence displayed in~\cite[\S2.1]{HLM} has these values.  The
piecewise formula on the same page places $i=3$ in the first branch, which
would instead give $d_4=2^{11}$.  The listed sequence and the identity
$2^{D_{i-1}}=8d_i$ used from level $4$ onward both give
$d_4=2^{D_3-3}=2^8$; we follow those two consistent pieces of data.  The
choice of this initial value does not affect any asymptotic conclusion.
Coordinates of $\F_2^{\,n}$ split into $s$ consecutive blocks,
$x=(x^1,\dots,x^s)$ with $x^i\in\F_2^{d_i}$; write
$x^{<i}:=(x^1,\dots,x^{i-1})\in\F_2^{D_{i-1}}$ for the earlier blocks. Set
$n:=D_s$ and $N:=2^{D_{s-1}}=8d_s$; the last equality holds because $s>3$, so
$d_s=2^{D_{s-1}-3}$.

For each $i$, HLM choose an injective steering map
\[
  \xi_i\colon\F_2^{D_{i-1}}\longrightarrow
  \F_2^{d_i}\setminus\{0\},
  \qquad \Xii_i:=\xi_i(\F_2^{D_{i-1}}).
\]
We use the following properties.
\begin{itemize}
\item[(a)] $\xi_i(u)\ne0$ for every $u$.
\item[(b)] $\Xii_i$ spans $\F_2^{d_i}$.
\item[(c)] $\xi_i$ is injective.
\item[(d)] $\xi_i(x^{<i})$ depends only on the blocks preceding $i$.
\item[(e)] If $i>3$, every subset of $\Xii_i$ containing at least
  $3\abs{\Xii_i}/4$ vectors spans $\F_2^{d_i}$.
\end{itemize}
For $i\le3$, the image $\Xii_i$ is a basis; for $i>3$, it is the set of
$8d_i$ vectors supplied by~\cite[Claim~2.1]{HLM}.  Thus (a)--(d) hold at
every level, while (e) is asserted only where the dimensions permit it.  We
call a family of steering maps with these properties an \emph{admissible HLM
realization}.  The level-set ladder uses (a)--(d), and the robust argument
uses (e) only at the top level $s>3$.

Define the level sets and the averaged witness by
\[
  A_i:=\bigl\{x\in\F_2^{\,n}:\inner{x^i}{\xi_i(x^{<i})}=0\bigr\},
  \qquad B_i:=\mathbf 1_{A_i},\qquad
  f:=\frac1s\sum_{i=1}^s B_i\;\in[0,1].
\]
The next proposition repeats the HLM regularity argument, treating the first
three levels separately.

\begin{proposition}[regularity of every admissible realization]
\label{prop:hlm-regularity}
Suppose that the steering maps satisfy (a)--(d), that $\Xii_i$ is a basis for
$i\le3$, and that (e) holds for $i>3$. Then the only
$\epsilon$-regular subspace for $f$ is $\{0\}$. Consequently
\[
  \reg_\epsilon(f)=2^n\ge\twr(s).
\]
\end{proposition}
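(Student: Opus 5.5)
We argue by contradiction. Fix a nonzero subspace $H\le G$ and show that it is not $\epsilon$-regular, i.e.\ that more than $\epsilon\abs G$ points lie in cosets $x+H$ carrying a Fourier coefficient larger than $\epsilon$ at some $t\notin H^\perp$. The subspace $\{0\}$ is trivially $\epsilon$-regular, so this gives $\reg_\epsilon(f)=2^{\codim\{0\}}=2^n$.

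\emph{Choosing the level.} Let $i$ be the smallest index for which the projection $P:=\pi_i(H)\le\F_2^{d_i}$ of $H$ onto block $i$ is nonzero. Then every $h\in H$ has $h^{<i}=0$. Hence on each coset $x+H$ the earlier blocks $x^{<i}$ are constant, and so is the steering vector $\xi:=\xi_i(x^{<i})$, by~(d). Take the test character $t$ with $t^i=\xi$ and all other blocks zero. We have $t\notin H^\perp$ exactly when $\xi\notin P^\perp$, and $P^\perp$ is a proper subspace of $\F_2^{d_i}$. Now count the prefixes $u\in\F_2^{D_{i-1}}$ with $\xi_i(u)\notin P^\perp$.
\begin{itemize}
\item For $i>3$, property~(e) says fewer than $3\abs{\Xii_i}/4$ vectors of $\Xii_i$ lie in $P^\perp$. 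Since $\xi_i$ is injective by~(c), at least a quarter of all prefixes are good.
\item For $i\le3$, $\Xii_i$ is a basis, so some basis vector lies outside $P^\perp$. This gives a good fraction of at least $1/\abs{\Xii_i}\ge1/8$.
\end{itemize}
In both cases the good points have density at least $1/8>\epsilon$, since $\epsilon\le1/256$.

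\emph{Evaluating the coefficient.} On a good coset, the levels $j<i$ are constant in $h$. Their contribution to $\widehat f_{x+H}(t)$ is therefore a constant times the average of the nontrivial character $(-1)^{\inner{t}{h}}$ over $H$, which vanishes. Level $i$ has
\[
  B_i(x+h)=\tfrac12\bigl(1+(-1)^{\inner{x^i+h^i}{\xi}}\bigr),
\]
and its nonconstant half matches the character exactly. So level $i$ contributes $\pm\frac1{2s}$. Since $\frac1{2s}\ge8\epsilon$ by~\eqref{eq:epsilon-s}, it suffices to show that the total contribution of the levels $j>i$ has absolute value at most $\frac1{4s}$, at least on a set of good cosets of density more than $\epsilon$.

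\emph{Higher levels (main obstacle).} This is the step I expect to be hardest, because $\xi_j(x^{<j}+h^{<j})$ varies along the coset. The constant halves of $B_j$ vanish as before. For the oscillating halves $(-1)^{\inner{x^j+h^j}{\xi_j(x^{<j}+h^{<j})}}$, my first attempt is to decompose $H$ along the filtration $H_j:=\{h\in H:h^{<j}=0\}$.
\begin{itemize}
\item If $\pi_j(H_j)\ne0$, then fixing $h$ modulo $H_j$ freezes $\xi_j$, while $h^j$ still ranges over a coset of $\pi_j(H_j)$. The inner sum is then a nontrivial character sum, unless $\xi_j$ happens to match the single forbidden functional.
\item If $\pi_j(H_j)=0$, the sign $(-1)^{\inner{x^j}{\xi_j}}$ depends on the free block $x^j$, which is constant on the coset. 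Averaging over $x^j$ then shows that this term has mean zero over the cosets sharing the same lower blocks.
\end{itemize}
Combining the two cases with a second-moment (Chebyshev) bound over the free coordinates $x^j$, $j>i$, should show that for most good cosets the higher-level error is at most $\frac1{4s}$. This would give more than $\epsilon\abs G$ non-uniform points.

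Finally, for $n\ge\twr(s)$ I will induct using $n=D_s\ge d_s=2^{D_{s-1}-3}$, checking the small cases $D_1,D_2,D_3=1,3,11$ and $d_4=2^8$ directly. The shift by $3$ in the exponent is absorbed by the slack in these early values, which keeps $D_i\ge\twr(i)+3$ from $i=3$ onward.
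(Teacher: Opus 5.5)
Your setup is correct: the choice of $i$, the frequency $t$ with $t^i=\xi_i(x^{<i})$, the good-prefix density $\ge\frac18$, and the exact level-$i$ contribution $\pm\frac1{2s}$. The step for the levels $j>i$ fails as planned, however.

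\textbf{The gap.} You claim that a second-moment bound makes the higher-level error at most $\frac1{4s}$ on most good cosets. This is false in general. Take $H=\{0,e\}$ with $e$ the unit vector of block~$1$. Then $i=1$, $t^1=\xi_1=1$, and every point is good. One has $\widehat f_{x+H}(t)=\frac1{2s}\bigl(\pm1+\sum_{j=2}^{s}D_j\bigr)$ with $D_j:=B_j(x)-B_j(x+e)$. Fix $x^{<j}$. The vectors $\xi_j(x^{<j})$ and $\xi_j(x^{<j}+e^{<j})$ are distinct and nonzero by (a) and (c), hence linearly independent, and $(x+e)^j=x^j$. So $(B_j(x),B_j(x+e))$ are independent fair bits, and the $D_j$ are independent lazy $\pm1$ steps.

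Your error term $\frac1{2s}\sum_{j\ge2}D_j$ therefore has standard deviation of order $s^{-1/2}$, far above $\frac1{4s}$. It lies in $[-\frac1{4s},\frac1{4s}]$ only when $\sum_{j\ge2}D_j=0$, an event of probability $O(s^{-1/2})$. Chebyshev only bounds deviation probabilities from above, so it cannot help. Your weaker fallback (small error on a set of density $>\epsilon$) would need an anti-concentration estimate for general $H$, which you have no tool for. In this example the coset is nonuniform whenever $\pm1+\sum D_j\neq0$, so demanding a small error is much stronger than needed.

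\textbf{The missing idea.} The first moment suffices, and you already use the key fact in your second bullet; it holds in both of your cases. Fix the blocks up to $i$ with $x^{<i}$ good, and put $X:=\sigma\,\widehat f_{x+H}(t)$, where $\sigma=(-1)^{\inner{x^i}{\xi}}$. The levels $j\le i$ then contribute exactly $\frac1{2s}$ to $X$. For each $j>i$, swap the average over $h\in H$ with the average over the fresh block $x^j$. For each fixed $h$, $B_j(x+h)$ averages to $\frac12$, because $\xi_j$ is nonzero and does not read block $j$. So the level-$j$ term has mean $\frac12\mathbb E_h(-1)^{\inner{t}{h}}=0$, with no case split along $H_j$ needed.

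Hence $X$ averages to exactly $\frac1{2s}$ over the blocks after $i$. Since $\abs X\le\frac12$, the fraction $q$ of completions with $X>\epsilon$ satisfies $\frac1{2s}\le\frac q2+(1-q)\epsilon$, so $q\ge\frac7{8s}$. Combined with the good density $\frac18$, the nonuniform points have density at least $\frac7{64s}>\frac1{16s}\ge\epsilon$. This is the paper's argument.

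\textbf{The tower bound.} Separately, your final bookkeeping targets a false inequality. In fact $n<\twr(s)$, and your invariant $D_i\ge\twr(i)+3$ already fails at $i=3$, since $D_3=11<19$. What the statement needs is $2^n\ge\twr(s)$, i.e.\ $n\ge\twr(s-1)$. This follows from $n\ge d_s$ and $d_i\ge\twr(i-1)$.
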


\begin{proof}
The zero subspace is regular because it has no nontrivial frequencies.  Let
$H\ne\{0\}$, and choose the least $i$ for which some $v\in H$ has
$v^i\ne0$.  Every element of $H$ then has zero blocks before $i$.

\smallskip\noindent\emph{Good prefixes.}
For $g\in G$, let $\gamma_g$ be the vector whose only nonzero block is block
$i$, where it equals $\xi_i(g^{<i})$. Call $g$ good if
$\gamma_g\notin H^\perp$. If $i\le3$, then $2^{D_{i-1}}=d_i$, and $\xi_i$ maps the prefixes
bijectively onto the basis $\Xii_i$.  The nonzero functional
$z\mapsto\inner{z}{v^i}$ is nonzero on at least one basis vector.  Hence at
least a $1/d_i\ge1/8$ fraction of the prefixes are good.  If $i>3$, the bad
steering vectors lie in the hyperplane
$(v^i)^\perp$. Property (e) therefore implies that fewer than three quarters
of the prefixes are bad. In either case at least a fraction
\[
  \rho:=\frac18
\]
of the vectors $g$ are good; the blocks after the prefix are unrestricted.

\smallskip\noindent\emph{Averaging one Fourier coefficient.}
Let $V_{>i}$ be the subspace supported on the blocks after $i$.  Fix a good
$g$, put
\[
  \sigma_g:=(-1)^{\inner{g^i}{\xi_i(g^{<i})}},
\]
and define
\[
  X_w:=\sigma_g\,\widehat f_{H+g+w}(\gamma_g)\qquad(w\in V_{>i}).
\]
The factor $\sigma_g$ chooses the sign of the Fourier coefficient; in
particular, $\abs{X_w}=\abs{\widehat f_{H+g+w}(\gamma_g)}$. Averaging over $w$
gives
\begin{equation}\label{eq:hlm-average}
  \mathbb E_{w\in V_{>i}}X_w=\frac1{2s}.
\end{equation}
Indeed, the terms $B_j$ with $j<i$ are constant on $H+g+w$ and hence have
zero Fourier coefficient at the nontrivial frequency $\gamma_g$. For $j=i$,
write $\chi(h)=(-1)^{\inner{\gamma_g}{h}}$. On the coset $H+g+w$ we have
\[
  B_i(g+w+h)=\frac12\bigl(1+\sigma_g\chi(h)\bigr),
\]
so multiplication by $\sigma_g\chi(h)$ and averaging over $h$ gives $1/2$.
For $j>i$, average first over the fresh block $w^j$; since $\xi_j$ is nonzero
and does not depend on $w^j$, that average is $1/2$, and the remaining
nontrivial character on $H$ has mean zero. This proves
\eqref{eq:hlm-average}.

\smallskip\noindent\emph{Many nonuniform cosets.}
A nontrivial character divides $H$ into two equal halves.  Since $0\le f\le1$,
we have $\abs{X_w}\le1/2$.  Let $q$ be the fraction of $w\in V_{>i}$ for
which $X_w>\epsilon$. Each such $w$ gives a nonuniform coset because
$\abs{\widehat f_{H+g+w}(\gamma_g)}>\epsilon$. From
\eqref{eq:hlm-average},
\[
  \frac1{2s}\le \frac q2+(1-q)\epsilon,
\]
so, by \eqref{eq:epsilon-s},
\[
  q\ge\frac{1/(2s)-\epsilon}{1/2-\epsilon}
  \ge\frac7{8s}.
\]
Choose $g$ uniformly from $G$ and $w$ uniformly from $V_{>i}$.  At least a
$\rho$ fraction of the choices of $g$ are good; conditional on each good
$g$, at least a $q$ fraction of the choices of $w$ give a nonuniform coset
$H+g+w$.  Therefore
\[
  \Pr_{g,w}(H+g+w\text{ is nonuniform})
  \ge\rho q\ge\frac7{64s}>\frac1{16s}\ge\epsilon.
\]
The sum $g+w$ is uniform on $G$, so the left side is exactly the fraction of
points that lie in nonuniform $H$-cosets.  Hence $H$ is not
$\epsilon$-regular.

It follows that $\reg_\epsilon(f)=2^n$. Since
$n\ge d_s\ge\twr(s-1)$ by Lemma~\ref{lem:blockgrowth},
$2^n\ge\twr(s)$.
\end{proof}

We also need the following elementary estimates for the block dimensions.

\begin{lemma}\label{lem:blockgrowth}
For every $i\ge1$ one has $d_i\ge\twr(i-1)$, and $\twr(m)\ge2^{m}$ for every
$m\ge1$. Consequently $d_s\ge\twr(s-1)$; moreover $n=d_s+\log_2 d_s+3$, so
$d_s\ge\half\log_2\abs{G}$, and $d_s=(1-o(1))\log_2\abs{G}$ as $\epsilon\to0$.
\end{lemma}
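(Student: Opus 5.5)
The plan is to prove the three assertions in order: the tower bound $d_i\ge\twr(i-1)$, the elementary inequality $\twr(m)\ge2^m$, and finally the identity $n=d_s+\log_2 d_s+3$ together with its consequences.

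\emph{The tower bound.} Argue by induction on $i$. The base cases are the listed values $d_1,d_2,d_3=1,2,8$, which satisfy $d_i\ge\twr(i-1)=1,2,4$. For the inductive step, first treat $i+1\le3$ directly; those cases are covered by the listed values. For $i\ge3$, use $d_{i+1}=2^{D_i-3}$. The aim is $D_i-3\ge\twr(i-1)$, since then $d_{i+1}\ge2^{\twr(i-1)}=\twr(i)$. Because $D_i=D_{i-1}+d_i\ge d_i+D_{i-1}$, it suffices to show $D_{i-1}\ge3$ for $i\ge3$. This holds because $D_2=3$ and $D$ is increasing. Hence $D_i-3\ge d_i\ge\twr(i-1)$. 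No subtlety is expected here; the only care needed is the shifted exponent $-3$, which is absorbed by $D_{i-1}\ge3$.

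\emph{The inequality $\twr(m)\ge2^m$.} This is again induction. For $m=1$ we have $\twr(1)=2$. For the step, $\twr(m+1)=2^{\twr(m)}\ge2^{2^m}\ge2^{m+1}$, since $2^m\ge m+1$.

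\emph{The identity for $n$.} Since $s>3$ (indeed $s\ge16$), the definition of $N$ gives $8d_s=2^{D_{s-1}}$, so $D_{s-1}=\log_2 d_s+3$. Then $n=D_s=D_{s-1}+d_s=d_s+\log_2 d_s+3$. With $\abs G=2^n$, this gives $\log_2\abs G=d_s+\log_2 d_s+3$. Since $d_s\ge\twr(s-1)\ge256$ for $s\ge16$, we have $\log_2 d_s+3\le d_s$, and therefore $d_s\ge\tfrac12\log_2\abs G$. Finally, $(\log_2 d_s+3)/d_s\to0$ as $d_s\to\infty$. Since $s\to\infty$ as $\epsilon\to0$, the tower bound forces $d_s\to\infty$, and so $d_s=(1-o(1))\log_2\abs G$.

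The main obstacle is purely bookkeeping. One must make sure the recursion branch used at $i=3$ (producing $d_4=2^8$) is the one adopted in the text, and that $\log_2 d_s$ is an integer, which it is because $d_s$ is a power of two.
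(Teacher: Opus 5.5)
Your proof is correct and follows the paper's argument: induction on $d_{i+1}=2^{D_i-3}\ge 2^{d_i}$ via $D_i-3=d_i+(D_{i-1}-3)\ge d_i$, the standard induction for $\twr(m)\ge 2^m$, and $n=D_s=d_s+D_{s-1}$ with $D_{s-1}=\log_2 d_s+3$. The only difference is cosmetic: you start the inductive step at $i=3$ using $D_2=3$, whereas the paper checks $d_4=2^8$ by hand and starts at $i=4$ using $D_3=11$.
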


\begin{proof}
The first four cases are
\[
  (d_1,d_2,d_3,d_4)=(1,2,8,2^8)
  \quad\text{and}\quad
  (\twr(0),\twr(1),\twr(2),\twr(3))=(1,2,4,16).
\]
For $i\ge4$, since $D_{i-1}\ge D_3=11>3$
we have $D_i-3=d_i+(D_{i-1}-3)\ge d_i$, so
\[
  d_{i+1}=2^{D_i-3}\ge 2^{d_i}\ge 2^{\twr(i-1)}=\twr(i).
\]
The bound $\twr(m)\ge2^m$ also goes by induction: $\twr(1)=2$, and
$\twr(m+1)=2^{\twr(m)}\ge2^{2^m}\ge2^{m+1}$.

For the final claims, the standing assumption gives $s>3$, so
$d_s=2^{D_{s-1}-3}$, that is, $D_{s-1}=\log_2 d_s+3$ and $N=8d_s$; hence
$n=D_s=d_s+D_{s-1}=d_s+\log_2 d_s+3$. Since $d_s\ge 8$ we have
$d_s\ge\log_2 d_s+3$, so $2d_s\ge n$ and $d_s\ge\half\log_2\abs{G}$. Finally
$d_s/n=1-(\log_2 d_s+3)/n$, and $d_s\ge\twr(s-1)\to\infty$ as $\epsilon\to0$,
so $d_s/n\to1$.
\end{proof}

\section{An explicit ladder in a level set}\label{sec:levelset}

The predicate defining $A_i$ is the bilinear expression
$\inner{x^i}{\xi_i(x^{<i})}$, except that the second argument is chosen by the
nonlinear steering map $\xi_i$. We place the two index families in disjoint
blocks. Then $\xi_i$ receives an argument from only one family, and membership
reduces to an ordinary bilinear pairing.

\begin{theorem}[level-set ladder; for every admissible realization]\label{thm:levelset}
For every $1\le i\le s$ and every $k\le d_i$, explicit vectors witness
the $k$-order property in $A_i$. In particular, $A_s$ has the $d_s$-order
property, where
\[
  d_s\ \ge\ \twr(s-1)=\twr\bigl(\lfloor 1/(16\epsilon)\rfloor-1\bigr)
  \qquad\text{and}\qquad d_s\ge\half\log_2\abs{G},
\]
and $d_s=(1-o(1))\log_2\abs{G}$ as $\epsilon\to0$.  In particular, for each
fixed $k$, the set $A_s$ fails to be $k$-stable once $\epsilon$ is
sufficiently small.
\end{theorem}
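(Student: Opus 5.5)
The plan is to use \emph{support separation}: put the $a$-family entirely in the prefix blocks $x^{<i}$ and the $b$-family entirely in block $i$. Then for $x=a_p+b_q$ the steering argument $x^{<i}$ comes only from $a_p$ and the paired block $x^i$ comes only from $b_q$. Membership in $A_i$ then becomes a bilinear condition $\inner{y_q}{v_p}=0$ between two families of vectors in $\F_2^{d_i}$. The order pattern can then be produced by linear algebra from a basis taken inside the image $\Xii_i$.

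\emph{Step 1 (choose the steering vectors).} By property (b), $\Xii_i$ spans $\F_2^{d_i}$, so it contains a basis $v_1,\dots,v_{d_i}$. Pick prefixes $u_1,\dots,u_{d_i}\in\F_2^{D_{i-1}}$ with $\xi_i(u_p)=v_p$. These prefixes are distinct by (c), although distinctness is not even needed. Let $a_p\in G$ be the vector with $a_p^{<i}=u_p$ and all other blocks zero.

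\emph{Step 2 (choose the block-$i$ vectors).} Since $v_1,\dots,v_{d_i}$ is a basis, the linear map $y\mapsto(\inner{y}{v_p})_{p}$ from $\F_2^{d_i}$ to $\F_2^{d_i}$ is a bijection. Hence for each $q$ there is a unique $y_q\in\F_2^{d_i}$ with $\inner{y_q}{v_p}=[p>q]$ for all $p$. Concretely, $y_q$ is the sum of the dual-basis vectors $v_p^{*}$ over $p>q$. Let $b_q\in G$ be the vector whose only nonzero block is block $i$, equal to $y_q$.

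\emph{Step 3 (verify the ladder).} For $x=a_p+b_q$ we have $x^{<i}=u_p$ and $x^i=y_q$; property (d) guarantees that $\xi_i(x^{<i})=v_p$ is determined by the prefix alone. The later blocks of $x$ are zero, but they do not enter the definition of $A_i$. Thus
\[
  a_p+b_q\in A_i \iff \inner{y_q}{v_p}=0 \iff p\le q .
\]
Restricting the indices to $1\le p,q\le k$ gives a ladder of length $k$ for every $k\le d_i$, consistent with Remark~\ref{rem:ladder-bookkeeping}.

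\emph{Step 4 (numerical consequences).} Take $i=s$. The bounds $d_s\ge\twr(s-1)$, $d_s\ge\half\log_2\abs G$ and $d_s=(1-o(1))\log_2\abs G$ are exactly Lemma~\ref{lem:blockgrowth}, and $s=\lfloor1/(16\epsilon)\rfloor$ by definition. For fixed $k$, once $\epsilon$ is small enough we have $d_s\ge\twr(s-1)\ge k$, so $A_s$ contains a ladder of length $k$ and is not $k$-stable.

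There is no real obstacle. The only points needing care are that the pairing uses exactly the vectors $\xi_i(u_p)$, which relies on (d) and the disjoint supports, and that the strict-versus-weak inequality convention in Step 2 matches Definition~\ref{def:op}.
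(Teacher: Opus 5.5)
Your proposal is correct and matches the paper's proof: the $a$'s carry preimages of independent steering vectors in the prefix blocks, the $b$'s carry sums of dual-basis vectors $\sum_{p>q}v_p^{*}$ in block $i$, and the numerical claims come from Lemma~\ref{lem:blockgrowth}. The only cosmetic difference is that you build the full length-$d_i$ ladder and truncate it, whereas the paper chooses $k$ independent vectors in $\Xii_i$ and extends them to a basis.
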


\begin{proof}
Fix $i$ and $k\le d_i$.

\emph{Columns.} By property (b), $\Xii_i$ spans $\F_2^{d_i}$, so it contains
$d_i$ linearly independent vectors; choose linearly independent
$v_1,\dots,v_k\in\Xii_i$.  Choose their preimages $u_\ell$, so that
$\xi_i(u_\ell)=v_\ell$.  The vectors $u_\ell$ are distinct because $\xi_i$
is injective.

\emph{Rows.} Extend $v_1,\dots,v_k$ to a basis $v_1,\dots,v_{d_i}$ of
$\F_2^{d_i}$. Since the standard form is nondegenerate, the map
$y\mapsto\inner{y}{\cdot}$ is an isomorphism onto the dual space, so the basis
has a dual basis $v_1^{*},\dots,v_{d_i}^{*}$, characterized by
$\inner{v_j^{*}}{v_\ell}=[j=\ell]$. For $1\le m\le k$ set
$w_m:=\sum_{\ell=m+1}^{k}v_\ell^{*}$ (an empty sum for $m=k$, so $w_k=0$);
then for $1\le\ell\le k$,
\[
  \inner{w_m}{v_\ell}=\sum_{j=m+1}^{k}[j=\ell]=[\ell>m].
\]

\emph{Separation.} In block coordinates set $a_\ell:=(u_\ell,0)$, carrying
$u_\ell$ in the blocks $<i$ and zero elsewhere, and $b_m:=(0,w_m,0)$, carrying
$w_m$ in block $i$ and zero elsewhere. Because addition is coordinatewise and
the blocks are disjoint, the sum $x:=a_\ell+b_m$ has $x^{<i}=u_\ell$ and
$x^i=w_m$; the blocks after the $i$th are zero and do not enter the definition
of $A_i$. Thus $\xi_i$ is evaluated at $u_\ell$, which depends only on $\ell$; no
additivity of $\xi_i$ is needed. Therefore
\[
  \inner{x^i}{\xi_i(x^{<i})}=\inner{w_m}{\xi_i(u_\ell)}
  =\inner{w_m}{v_\ell}=[\ell>m].
\]

\emph{Ladder.} Consequently $a_\ell+b_m\in A_i\iff\inner{w_m}{v_\ell}=0\iff
\ell\le m$, which is the $k$-order property of Definition~\ref{def:op} with the
$a$'s indexed by $\ell$ and the $b$'s by $m$. The $a_\ell$ are distinct because
the $u_\ell$ are, and the $w_m$ are distinct because they realize the distinct
patterns $([\ell>m])_{\ell=1}^{k}$.

\emph{Length.} Take $i=s$ and $k=d_s$; the stated bounds on $d_s$ are
Lemma~\ref{lem:blockgrowth}.
\end{proof}

\begin{remark}[a hand instance]\label{rem:hand}
With $v_\ell=e_\ell$ and $d=3$, the construction gives
$w_1=(0,1,1)$, $w_2=(0,0,1)$, and $w_3=(0,0,0)$.  The $3\times3$ membership
table of $a_\ell+b_m\in A_i$ is
exactly the half-graph $[\ell\le m]$. That $w_3=0$ is harmless: probe vectors
need not be nonzero, only the steering values $\xi_i(\cdot)$ are, and those are
the $v_\ell$.
\end{remark}

\begin{remark}[the support-separated maximum]\label{rem:exact}
Consider the relation
\[
  \inner{w}{v}=0
  \qquad\bigl(w\in\F_2^d,\ v\in\F_2^d\setminus\{0\}\bigr).
\]
A ladder $\inner{w_m}{v_\ell}=[\ell>m]$ forces
$v_1,\dots,v_k$ to be linearly independent.  Indeed, suppose
$\sum_{\ell\in S}v_\ell=0$ for a nonempty set $S$.  A singleton $S$ would
give a zero column.  Otherwise let $s_1<s_2$ be the two least elements of
$S$.  Pairing the dependence with $w_{s_1}$ and $w_{s_2}$ gives
\[
  0\equiv\abs S-1\pmod2,
  \qquad
  0\equiv\abs S-2\pmod2,
\]
a contradiction.  Hence this relation has ladder index exactly $d$, and
Theorem~\ref{thm:levelset} is optimal for the support-separated mechanism.
If zero columns are allowed, the maximum becomes $d+1$: only the first
column can be zero, the remaining columns are independent, and prepending a
zero column to the construction above attains this bound.  We do not
claim that $\ldr(A_s)=d_s$ in the full Cayley relation; we prove only
$\ldr(A_s)\ge d_s$.
\end{remark}

\section{A robust ladder in the top level set}\label{sec:robust}

Write
\[
  W:=\F_2^{D_{s-1}},\qquad U:=\F_2^{d},\qquad d:=d_s,
  \qquad N:=\abs W=8d,
\]
so that $G=W\oplus U$.  For a set $C\subseteq W\times U$ and $u\in U$, its
\emph{row trace} is
\[
  R_C(u):=\{w\in W:(w,u)\in C\}\subseteq W.
\]
The VC dimension of the row family $\{R_C(u):u\in U\}$ is the largest
integer $r$ for which some $r$-element subset of $W$ is shattered.  Shattering
$r$ columns already gives a ladder of length $r$, because the row family
realizes the $r$ suffix patterns $([p\le q])_{q=1}^r$.  We combine this
observation with Sauer's lemma.

\begin{lemma}[Sauer bound for stable row families]\label{lem:trace-bound}
If $C\subseteq W\times U$ has no ladder of length $r$, then
\[
  \abs{\{R_C(u):u\in U\}}
  \le \sum_{j=0}^{r-1}\binom Nj.
\]
Moreover, for integers $1\le q\le N/2$,
\[
  \sum_{j=0}^{q}\binom Nj\le\left(\frac{\econst N}{q}\right)^q.
\]
\end{lemma}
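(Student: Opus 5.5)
The plan is to reduce the first claim to the classical Sauer--Shelah lemma, applied to the row family $\mathcal R:=\{R_C(u):u\in U\}$ regarded as a family of subsets of the $N$-element ground set $W$. The second claim is the standard entropy-type bound on partial binomial sums.

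\emph{First claim.} Suppose $\mathcal R$ shattered some $r$-element set $\{c_1,\dots,c_r\}\subseteq W$. Then for each $q\in\{1,\dots,r\}$ there is a row $u_q\in U$ whose trace meets this set exactly in $\{c_1,\dots,c_q\}$. We have
\[
(c_p,u_q)\in C\iff p\le q.
\]
This is a ladder of length $r$ in the bipartite relation $C\subseteq W\times U$, with the $c_p$ as one family and the $u_q$ as the other, matching the observation stated just before the lemma. Here ``ladder in $C$'' is read as the half-graph pattern in the relation $C$ between the two coordinate factors of $G=W\oplus U$. For a Cayley set $C$ this is a genuine ladder in the sense of Definition~\ref{def:op}: take $a_p=(c_p,0)$ and $b_q=(0,u_q)$, so that $a_p+b_q=(c_p,u_q)$. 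The $c_p$ are distinct by construction, and the $u_q$ are distinct because they realize distinct traces, so Remark~\ref{rem:ladder-bookkeeping} is not even needed.

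Hence the hypothesis forces the VC dimension of $\mathcal R$ to be at most $r-1$. Sauer's lemma then bounds the number of distinct traces by $\sum_{j=0}^{r-1}\binom Nj$. I would cite Sauer's lemma as classical. If a self-contained argument is wanted, the shifting argument or the induction on $N$ via $\abs{\mathcal R}\le\abs{\mathcal R|_{W\setminus x}}+\abs{\{S: S,S\cup\{x\}\in\mathcal R,\ x\notin S\}}$ both suffice.

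\emph{Second claim.} I would use the usual exponential-moment trick. For $0<t:=q/N\le 1/2$,
\[
\sum_{j\le q}\binom Nj\le t^{-q}\sum_{j\le q}\binom Nj t^j\le t^{-q}(1+t)^N\le t^{-q}\econst^{tN}=\left(\frac{N}{q}\right)^q\econst^{q}.
\]
The first inequality uses $t^{j-q}\ge1$ for $j\le q$, since $t\le1$. The restriction $q\le N/2$ is not actually needed for this step, only $q\le N$.

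The only step that needs care is the translation between a ladder in $C$ and shattering. Specifically, shattering yields all subsets, but we only need the $r$ initial segments, and the direction of the implication is from no ladder to small VC dimension, which is what the contrapositive above gives. I do not expect any real obstacle; the rest is citation and a two-line computation.
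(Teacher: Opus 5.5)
Your proposal is correct and follows the paper's proof. For the first claim, shattering $r$ columns yields $r$ nested trace patterns and hence a ladder, and Sauer's lemma finishes; the paper uses suffix patterns with the rows as the $a$'s, while you use prefix patterns with the columns as the $a$'s, which makes no difference. For the tail bound the paper weights by $x=q/(N-q)$, getting the intermediate bound $(N/q)^q(N/(N-q))^{N-q}$ and needing $q\le N/2$ so that $x\le1$; your weight $t=q/N$ reaches $(\econst N/q)^q$ directly and, as you correctly note, needs only $q\le N$.
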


\begin{proof}
If $w_1,\dots,w_r$ are shattered, choose $u_p$ whose trace on these columns
is $([p\le q])_{q=1}^r$.  Then $(0,u_p)+(w_q,0)\in C$ exactly when $p\le q$,
which is a ladder of length $r$.  Hence the row family has VC dimension at
most $r-1$, and Sauer's lemma gives the first inequality~\cite{Sauer}.  For
the second inequality, put $x=q/(N-q)\le1$.  Since $x^j\ge x^q$ for
$0\le j\le q$,
\[
  x^q\sum_{j=0}^{q}\binom Nj
  \le \sum_{j=0}^{q}\binom Nj x^j
  \le (1+x)^N.
\]
Therefore
\[
  \sum_{j=0}^{q}\binom Nj
  \le \left(\frac Nq\right)^q
       \left(\frac N{N-q}\right)^{N-q}
  \le \left(\frac{\econst N}{q}\right)^q,
\]
because $N/(N-q)=1+q/(N-q)$ and $(1+y)^{1/y}\le\econst$ for $y>0$.
\end{proof}

For $A=A_s$, the row traces are not merely numerous.  They form an
error-correcting code.

\begin{theorem}[constant-edit robustness of the top level]\label{thm:robust}
Let $A:=A_s$ come from an admissible HLM realization.  Because $d$ is a
power of $2$ and $s\ge16$, it is divisible by $8$; put $r:=d/8$.  Then
\begin{equation}\label{eq:robust-distance}
  \operatorname{dist}_G\bigl(A,\Stable_r(G)\bigr)
  >\frac18\left(1-\left(\frac{\econst}{4}\right)^r\right)
  >\frac1{16}.
\end{equation}
Consequently, for every $0\le\eta\le1/16$,
\[
  \ldrs_\eta(A)\ge\frac d8\ge\twr(s-2),
\]
and $d/8\ge(1/16)\log_2\abs G$.  More generally, for each fixed $\eta<1/8$, one has
$\ldrs_\eta(A)\ge d/8$ once $s$ is sufficiently large.
\end{theorem}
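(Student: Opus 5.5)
The plan is to compare two counts of distinct row traces. Fix any $C\subseteq G$ with $\abs{A\mathbin\triangle C}\le\eta\abs G$, and view $C\subseteq W\times U$. The unedited traces form a code, while the traces of a stable $C$ are few by Lemma~\ref{lem:trace-bound}; the disagreement will force many edits.

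\emph{Step 1: the traces of $A$ form a code.} For $u\in U$, the trace is
\[
  R_A(u)=\{w\in W:\inner{u}{\xi_s(w)}=0\}.
\]
For $u\ne u'$, the traces differ exactly at those $w$ with $\inner{u+u'}{\xi_s(w)}=1$. The vectors $\xi_s(w)$ are distinct because $\xi_s$ is injective, and $\Xii_s$ has exactly $N$ elements. The ones with $\inner{u+u'}{\cdot}=0$ lie in a hyperplane, so they cannot span $\F_2^d$. Property (e) at level $s>3$ then says there are fewer than $3N/4$ of them. Hence the traces are pairwise at Hamming distance greater than $N/4$, and all $2^d$ traces are distinct.

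\emph{Step 2: stable sets have few traces.} Suppose $C\in\Stable_r(G)$. By Lemma~\ref{lem:trace-bound} with $q=r-1$, and since $N/r=64$, the number of distinct traces of $C$ is at most
\[
  T:=\sum_{j<r}\binom Nj\le(64\econst)^{r}.
\]
A cleaner but comparable form of this bound would also suffice.

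\emph{Step 3: the disagreement costs at least $N/8$ per row.} Group the rows $u$ by the trace $R_C(u)$. Within each class, at most one $u$ can have $\abs{R_A(u)\mathbin\triangle R_C(u)}\le N/8$. Two such rows would put their $A$-traces within distance $N/4$ of each other, contradicting Step 1. So at least $2^d-T$ rows each contribute more than $N/8$ edits. This gives
\[
  \abs{A\mathbin\triangle C}>\frac N8\,(2^d-T),
  \qquad\text{so}\qquad
  \frac{\abs{A\mathbin\triangle C}}{\abs G}>\frac18\Bigl(1-\frac{T}{2^d}\Bigr).
\]

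\emph{Step 4: the main obstacle.} The delicate point is matching the stated constant $(\econst/4)^r$. Since $2^d=256^{r}$, the crude estimate only gives $T/2^d\le(64\econst/256)^r=(\econst/4)^r$. That is exactly the stated bound, provided Lemma~\ref{lem:trace-bound} is applied with $q=r$; the larger sum $\sum_{j\le r}$ still dominates $T$. So the choice $N/q=64$ is correct as is. The last strict inequality needs $(\econst/4)^r<1/2$, which holds since $r\ge2$. Alternatively one can use the crude fact that $r$ is huge.

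\emph{Step 5: the consequences.} Proposition~\ref{prop:distance} converts the distance bound into $\ldrs_\eta(A)\ge r$ for all $\eta\le1/16$. Next, $d/8\ge\twr(s-2)$ follows from $d\ge 2^{d_{s-1}}$: we have $d/8=2^{D_{s-1}-6}\ge2^{d_{s-1}}\ge\twr(s-2)$, since $D_{s-2}\ge6$. The bound $d/8\ge\tfrac1{16}\log_2\abs G$ follows from Lemma~\ref{lem:blockgrowth}. Finally, for fixed $\eta<1/8$, the factor $(\econst/4)^r\to0$ as $s\to\infty$, so the same argument gives a distance above $\eta$ once $s$ is large.
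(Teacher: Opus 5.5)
Your proposal is correct and follows the paper's proof essentially step for step: property (e) gives row traces at pairwise distance greater than $N/4$, Sauer's lemma gives at most $(64\econst)^r=2^d(\econst/4)^r$ traces for an $r$-stable set, and each edited trace is within $N/8$ of at most one original row. The one deviation is your tower estimate via $d/8=2^{D_{s-1}-6}\ge2^{d_{s-1}}$ in place of the paper's $2^t\ge8t$ argument; it is valid and in fact yields the stronger bound $d/8\ge\twr(s-1)$.
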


\begin{proof}
For $u\in U$, let $\rho_u\in\{0,1\}^{W}$ be the indicator of $R_A(u)$:
\[
  \rho_u(w)=[\inner{u}{\xi_s(w)}=0].
\]
If $u\ne v$, put $z=u+v\ne0$.  The two rows differ exactly at the columns
for which $\inner{z}{\xi_s(w)}=1$.  If they differed in at most $N/4$
columns, then at least $3N/4$ steering vectors would lie in the proper
hyperplane $z^\perp$.  Property~\textup{(e)} says that those vectors span
$U$, a contradiction.  Hence, writing $d_H$ for Hamming distance,
\begin{equation}\label{eq:code-distance}
  d_H(\rho_u,\rho_v)>\frac N4\qquad(u\ne v).
\end{equation}
Thus the $2^d$ rows form a binary code of length $N=8d$ and relative
distance greater than $1/4$.

Let $C\subseteq G$ be $r$-stable, and let
$M:=\abs{\{R_C(u):u\in U\}}$ be the number of its distinct row traces.
Lemma~\ref{lem:trace-bound}, with $N=8d$ and $d=8r$, gives
\begin{equation}\label{eq:stable-traces}
  M\le\sum_{j=0}^{r-1}\binom Nj
  \le(64\econst)^r
  =2^d\left(\frac{\econst}{4}\right)^r.
\end{equation}
Associate each original row $\rho_u$ with its edited trace $R_C(u)$.  A fixed
edited trace can lie within Hamming distance $N/8$ of at most one codeword:
two such codewords would be at distance at most $N/4$, contrary to
\eqref{eq:code-distance}.  Since $C$ has only $M$ distinct traces, at most
$M$ original rows are changed in at most $N/8$ positions.  Every remaining
row requires more than $N/8$ edits.  Hence
\[
  \frac{\abs{A\mathbin\triangle C}}{\abs G}
  >\frac{(2^d-M)N/8}{2^dN}
  =\frac18\left(1-\frac{M}{2^d}\right)
  \ge\frac18\left(1-\left(\frac{\econst}{4}\right)^r\right).
\]
Taking the minimum over $C\in\Stable_r(G)$ proves the first inequality in
\eqref{eq:robust-distance}.  Since $r=d/8\ge3$ and $\econst<3$,
$(\econst/4)^r<(3/4)^3<1/2$, which proves the second.
Proposition~\ref{prop:distance} now gives the robust bound for
$\eta\le1/16$.

Lemma~\ref{lem:blockgrowth} gives
$d/8\ge(1/16)\log_2\abs G$.  For the tower bound, put
$t:=\twr(s-2)$.  Then $d\ge\twr(s-1)=2^t$, while $2^t\ge8t$ for
$t\ge4$.  Hence $d/8\ge t=\twr(s-2)$.  Finally, the right side of
\eqref{eq:robust-distance} tends to $1/8$ as $s\to\infty$, which proves the
last assertion.
\end{proof}

\begin{remark}\label{rem:log-false}
Protecting one prescribed ladder row by row gives only a logarithmic bound.
Theorem~\ref{thm:robust} instead uses all $2^d$ rows.  Their code distance
prevents a sparse edit from merging many traces, and Sauer's lemma then
forces a ladder of linear length.  Thus no
$O(\log(1/\epsilon))$ upper bound holds for
$\ldrs_\epsilon(A_s)$ at the HLM scale.
\end{remark}

\section{A robust superlevel set of the averaged witness}\label{sec:witness}

The tower lower bound concerns the average $f$, not one level predicate.  We
now extract a binary superlevel set of $f$ whose ladders also survive sparse
edits.  For $w\in W$, define
\[
  c(w):=\sum_{j<s}B_j(x)
  \qquad\text{for any $x$ with $x^{<s}=w$}.
\]
This is well defined because the lower predicates do not read block $s$.

For uniform $w\in W$, the variables $B_1(w),\dots,B_{s-1}(w)$ are
independent fair bits.  Conditional on the preceding blocks,
$\xi_j(w^{<j})$ is a fixed nonzero vector, whose pairing with the fresh block
$w^j$ is uniform in $\F_2$.  Hence
$c\sim\operatorname{Binomial}(s-1,1/2)$.

\begin{lemma}[a large independent level fibre]\label{lem:independent-fibre}
Fix admissible lower maps $\xi_1,\dots,\xi_{s-1}$.  There are a value
$c_0\in\{0,\dots,s-1\}$, a set $F_0\subseteq c^{-1}(c_0)$, and an admissible
choice of the top map $\xi_s$ such that, with $m:=\abs{F_0}$,
\[
  16\mid m,\qquad \frac{2d}{\sqrt{s}}\le m\le\frac d2,
\]
and the vectors $\{\xi_s(w):w\in F_0\}$ are linearly independent.
\end{lemma}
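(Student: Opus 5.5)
The plan has two independent parts. First choose the fibre using only the lower maps. Then build $\xi_s$ so that its values on the fibre are a fixed independent family while property (e) still holds at the top level.

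\emph{Choosing the fibre.} The lower maps $\xi_1,\dots,\xi_{s-1}$ are fixed, so $c\colon W\to\{0,\dots,s-1\}$ is determined. By the discussion before the lemma, $c\sim\operatorname{Binomial}(s-1,1/2)$ for uniform $w\in W$. Take $c_0:=\lfloor (s-1)/2\rfloor$. The standard central-binomial bound $\binom{n}{\lfloor n/2\rfloor}2^{-n}\ge 1/\sqrt{2n}$ then gives
\[
  \abs{c^{-1}(c_0)}\ge \frac{N}{\sqrt{2(s-1)}}\ge\frac{8d}{\sqrt{2s}}\ge\frac{4d}{\sqrt s}.
\]
Let $m$ be the largest multiple of $16$ that is at most $\min\{\abs{c^{-1}(c_0)},d/2\}$, and let $F_0$ be any $m$-element subset of the fibre. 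Since $d$ is a power of $2$ with $d\ge 2^{8}$, the number $d/2$ is a multiple of $16$. Since $s\ge16$, we have $d/2\ge 2d/\sqrt s$. The minimum above is at least $2d/\sqrt s+16$, because $2d/\sqrt s\ge 16$ (as $d\ge\twr(s-1)$ is enormous compared with $\sqrt s$). Hence rounding down to a multiple of $16$ keeps $m\ge 2d/\sqrt s$, and $m\le d/2$ holds by construction.

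\emph{Choosing $\xi_s$.} Since $m\le d/2<d$, we may define $\xi_s$ on $F_0$ to be an injection onto $\{e_1,\dots,e_m\}$. These values are nonzero and linearly independent. On the remaining $N-m$ points of $W$, assign distinct values by sampling uniformly without replacement from $\F_2^{d}\setminus(\{0\}\cup\{e_1,\dots,e_m\})$. Properties (a), (c) and (d) then hold automatically, since $\xi_s$ is an arbitrary function of $x^{<s}=w$. Property (b) follows from (e), so it remains to show that (e) holds with positive probability.

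Property (e) says that no proper hyperplane $z^\perp$ with $z\ne0$ contains $3N/4$ or more of the $N$ steering vectors. The hyperplane can contain all $m\le d/2=N/16$ fixed vectors. So it suffices that, for every $z\ne0$, fewer than $3N/4-N/16=11N/16$ of the $N-m\ge15N/16$ random vectors satisfy $\inner{z}{\xi_s(w)}=0$. This is a fraction $11/15$ of the random sample, against an expected fraction of essentially $1/2$; the excluded vectors $0,e_1,\dots,e_m$ shift this by only a negligible amount.

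\emph{Main obstacle: the tail estimate and union bound.} The hardest step is a tail bound strong enough to survive a union bound over all $z\ne0$. Sampling without replacement satisfies Hoeffding's inequality (Hoeffding's comparison with sampling with replacement). Using $N=8d$, the probability that a fixed hyperplane receives at least $11/15$ of the $\ge 15N/16$ random vectors is at most
\[
  \exp\!\Bigl(-2\bigl(\tfrac{11}{15}-\tfrac12-o(1)\bigr)^2\cdot\tfrac{15N}{16}\Bigr)\approx e^{-0.81d}.
\]
There are fewer than $2^d=e^{0.693d}$ choices of $z$, so the union bound gives total failure probability $e^{-\Omega(d)}<1$. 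I expect this numerical margin to be the delicate point, since it is not large. If it proved too tight, I would fall back on a sharper Chernoff/KL bound for the deviation from $1/2$ to $11/15$, which has a rate of roughly $0.11$ per sample, or $\approx0.83d$ here. Alternatively, one could reduce $m$ toward $2d/\sqrt s$, so that the fixed vectors cost only $O(N/\sqrt s)$ and the threshold fraction moves closer to $3/4$. Either way, some choice of the random values satisfies (e), and this yields the admissible top map required by the lemma.
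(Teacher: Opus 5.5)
Your proof is correct in substance, up to three fixable slips listed below. It chooses the fibre as the paper does, but builds the top map by a different route.

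\textbf{Comparison of routes.} You take a binomial mode $c_0$ and round $\min\{\abs F,d/2\}$ down to a multiple of $16$, as the paper does. The paper then draws all $N=8d$ values of $\xi_s$ independently and uniformly from $\F_2^d$. It excludes four bad events: a zero value, a collision, a hyperplane containing at least $6d$ of the vectors, and linear dependence on $F_0$. The last is bounded by revealing the $F_0$-vectors one at a time, giving $<2^{m-d}\le2^{-d/2}$. Each vector lies in a fixed hyperplane with probability exactly $1/2$ and the threshold is $3/4$, so Hoeffding gives $\econst^{-d}$ per hyperplane and $(2/\econst)^d$ after the union bound.

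You instead plant $e_1,\dots,e_m$ on $F_0$ and sample the other values without replacement. This makes independence, nonvanishing and injectivity automatic. The price is that the planted vectors can all lie in one hyperplane (any $z$ vanishing on the first $m$ coordinates), so (e) must hold with a reduced threshold. You also need Hoeffding's comparison for sampling without replacement. The per-hyperplane bound weakens from $\econst^{-d}$ to roughly $\econst^{-49d/60}$, which still beats the $2^d$ hyperplanes. The paper's version is shorter and has more slack; yours makes the independent family on $F_0$ explicit.

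\textbf{Points to repair.}

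\emph{The (e) threshold.} The sufficient condition must be that fewer than $3N/4-m$ of the $N-m$ random vectors lie in $z^\perp$, not fewer than $11N/16$. With the exact threshold, the Hoeffding exponent is $2(N/4-m/2)^2/(N-m)$. This decreases in $m$ and equals $49d/60\approx0.82d$ at $m=N/16$, which is your computation. With the threshold $11N/16$, the argument breaks when $m$ is small. That happens for large $s$, where $m\approx\abs F\ll d/2$, so almost $N$ random vectors remain. The exponent then drops toward $9d/16<d\ln 2$, and the union bound fails.

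\emph{The central binomial bound.} The inequality $\binom n{\lfloor n/2\rfloor}2^{-n}\ge1/\sqrt{2n}$ is not valid for every $n$; it fails for $n=1,3,5$. It does hold for $n=s-1\ge15$. The paper's bound $4^{-r}\binom{2r}{r}\ge1/(2\sqrt r)$ already gives modal mass $\ge1/(2\sqrt s)$ for either parity, which is all you need.

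\emph{The case $s=16$.} Your claim $\min\{\abs F,d/2\}\ge2d/\sqrt s+16$ is false at $s=16$, where the minimum is $d/2=2d/\sqrt s$. There you must use $16\mid d/2$ directly to get $m=d/2$. The paper does this by splitting into the cases $s\le64$ and $s>64$.
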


\begin{proof}
Choose $c_0$ at a mode of the binomial distribution and put
$F=c^{-1}(c_0)$.  We first bound the modal mass.  Define
\[
  a_r:=4^{-r}\binom{2r}{r}\qquad(r\ge1).
\]
Then $a_1=1/2$ and
\[
  a_r=\left(1-\frac1{2r}\right)a_{r-1}.
\]
Induction gives $a_r\ge1/(2\sqrt r)$.  Indeed, for $r\ge2$ the recurrence
and the induction hypothesis give
\[
  a_r\ge\frac{2r-1}{4r\sqrt{r-1}}\ge\frac1{2\sqrt r};
\]
the last inequality follows, after squaring, from
$r(2r-1)^2-4r^2(r-1)=r>0$.  The modal mass for $2r$ trials is $a_r$, and the
modal mass for $2r+1$ trials is $a_{r+1}$.  Applying these bounds with
$s-1$ trials gives
\[
  2^{-(s-1)}\binom{s-1}{\lfloor(s-1)/2\rfloor}
  \ge\frac1{2\sqrt{s}}.
\]
Consequently,
\begin{equation}\label{eq:fibre-size}
  \abs F\ge\frac{N}{2\sqrt{s}}=\frac{4d}{\sqrt{s}}.
\end{equation}
Let $L:=\min\{\abs F,d/2\}$ and set
\[
  m:=16\left\lfloor\frac{L}{16}\right\rfloor.
\]
Since $s\ge16$, the power of two $d=2^{D_{s-1}-3}$ is divisible by $32$;
thus $d/2$ is divisible by $16$.  If $16\le s\le64$, then \eqref{eq:fibre-size} gives $L=d/2$, hence
$m=d/2\ge2d/\sqrt{s}$.  If $s>64$, then both $\abs F$ and $d/2$ are at
least $4d/\sqrt{s}$, so $L\ge4d/\sqrt{s}$.  Also $L\ge32$; hence
$m\ge L-16\ge L/2\ge2d/\sqrt{s}$.  Choose any
$F_0\subseteq F$ of size $m$.

Now draw the vectors $g_w:=\xi_s(w)$ independently and uniformly from
$U=\F_2^d$.  The probability that some $g_w$ is zero is at most $N2^{-d}$, and the
probability of a collision is at most $\binom{N}{2}2^{-d}$.  If
property~\textup{(e)} fails, some hyperplane contains at least $6d$ of the
$8d$ vectors.  For a fixed hyperplane this event has probability at most
$\econst^{-d}$ by Hoeffding's inequality; a union bound over the fewer than
$2^d$ hyperplanes gives the bound $(2/\econst)^d$.  In addition, revealing the vectors indexed by
$F_0$ in order shows that the probability that they are linearly dependent is
less than
\[
  \sum_{j=0}^{m-1}2^{j-d}<2^{m-d}\le2^{-d/2}.
\]
Since $d\ge\twr(15)>256$, the sum of these four bounds is less than $1$.
Choose a realization outside all four bad events.  It satisfies
\textup{(a)}, \textup{(c)}, and \textup{(e)} at level $s$, and hence also
\textup{(b)}; property~\textup{(d)} is built into the definition.  The
chosen vectors on $F_0$ are linearly independent.
\end{proof}

\begin{theorem}[edit-robust superlevel set]\label{thm:witness}
There exist an admissible realization of the HLM witness $f$, a threshold
$\theta=(c_0+1)/s$, and an integer $m$ satisfying
\[
  16\mid m,\qquad \frac{2d}{\sqrt{s}}\le m\le\frac d2,
\]
such that the set
\[
  S:=\{x\in G:f(x)\ge\theta\}
\]
satisfies
\[
  \ldrs_\epsilon(S)\ge\frac{m}{16}
  \ge\frac{d}{8\sqrt{s}}
  \ge\twr(s-2).
\]
Consequently
\[
  \operatorname{dist}_G\bigl(S,\Stable_{m/16}(G)\bigr)>\epsilon.
\]
The same realization still satisfies
$\reg_\epsilon(f)=2^n\ge\twr(s)$.
\end{theorem}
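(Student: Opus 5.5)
The plan is to take the realization of Lemma~\ref{lem:independent-fibre} and work only inside the subgrid $F_0\times U\subseteq W\times U=G$. Let $\theta=(c_0+1)/s$. For $x=(w,u)$ with $w\in F_0$ we have $sf(x)=c_0+B_s(x)$, so $x\in S$ exactly when $x\in A_s$, that is, when $\inner{u}{\xi_s(w)}=0$. Hence the row trace of $S$ on $F_0$ is $\rho_u|_{F_0}=\bigl([\inner{u}{g_w}=0]\bigr)_{w\in F_0}$, where $g_w=\xi_s(w)$. The $g_w$ with $w\in F_0$ are linearly independent, so the linear map $u\mapsto(\inner{u}{g_w})_{w\in F_0}$ from $U$ onto $\F_2^{m}$ is surjective. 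Therefore every pattern in $\{0,1\}^{F_0}$ occurs as a restricted row, each exactly $2^{d-m}$ times.

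Next, fix $C=S\mathbin\triangle B$ with $\abs B\le\epsilon\abs G$, and suppose that $C$ has no ladder of length $r:=m/16$. A ladder in the restricted relation $C\cap(F_0\times U)$ built as in Lemma~\ref{lem:trace-bound}, with $a_p=(0,u_p)$ and $b_q=(w_q,0)$, is a ladder in $C$. So the proof of Lemma~\ref{lem:trace-bound}, applied with $W$ replaced by $F_0$ and $N$ by $m$, bounds the number of distinct restricted traces of $C$ by
\[
  M\le\sum_{j<r}\binom mj\le(16\econst)^{m/16}\le2^{0.35m}.
\]
Every trace has a Hamming ball of radius $m/8$ of size at most $2^{H(1/8)m}\le2^{0.55m}$. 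Hence at most a fraction $2^{-0.1m}$ of the patterns in $\{0,1\}^m$, and so of the rows $u\in U$, lie within distance $m/8$ of some trace of $C$. Every other row $u$ needs more than $m/8$ edits inside $F_0\times\{u\}$. Since $m\ge2d/\sqrt s$ is enormous, this gives
\[
  \abs B>\bigl(1-2^{-0.1m}\bigr)2^d\frac m8\ge\frac12\cdot2^d\cdot\frac{d}{4\sqrt s}.
\]

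On the other hand, \eqref{eq:epsilon-s} gives $\epsilon\abs G\le 2^d\cdot 8d/(16s)=2^d d/(2s)$. Since $s\ge16$ means $\sqrt s\ge4$, the lower bound $2^d d/(8\sqrt s)$ exceeds $2^d d/(2s)$. This is a contradiction, so $\ldrs_\epsilon(S)\ge m/16$, and Proposition~\ref{prop:distance} gives the distance statement. The bound $m/16\ge d/(8\sqrt s)$ is the lower bound on $m$ from Lemma~\ref{lem:independent-fibre}. For the tower bound, write $t=\twr(s-2)$. Then $d\ge2^t$ by Lemma~\ref{lem:blockgrowth}, and $2^t\ge8\sqrt s\,t$ holds trivially because $t\ge2^{s-2}$ dwarfs $s$. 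Finally, the realization is admissible, so Proposition~\ref{prop:hlm-regularity} gives $\reg_\epsilon(f)=2^n\ge\twr(s)$.

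The main obstacle is the counting in the second paragraph. The restricted rows now form the full cube $\{0,1\}^m$ rather than a code with large distance, so the argument of Theorem~\ref{thm:robust} does not transfer directly. It has to be replaced by a covering estimate: $M$ Hamming balls of radius $m/8$ cover only an exponentially small part of the cube. I would check the entropy constants carefully, namely $(\log_2(16\econst))/16+H(1/8)<1$. If that margin proves too thin, the fallback is to shrink the radius to $m/12$; the final inequality still has slack, since $\sqrt s\ge4$.
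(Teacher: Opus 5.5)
Your proof is correct. It uses the same ingredients as the paper's proof:
\begin{itemize}
\item on $F_0\times U$ the set $S$ agrees with $A_s$;
\item the restricted row patterns run through all of $\{0,1\}^{m}$, each with multiplicity $2^{d-m}$;
\item Lemma~\ref{lem:trace-bound} caps the number of restricted traces of an $(m/16)$-stable set at $(16\econst)^{m/16}$.
\end{itemize}

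Only the order of the count differs. The paper converts the edit budget into a local density $\delta=\epsilon N/m\le1/16$. Markov's inequality then gives $2^{m-1}$ original patterns carried by rows with at most $m/8$ edits. From these it deduces at least $2^{m-1}/(8\econst)^{m/8}$ distinct edited traces, which contradicts Sauer.

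You apply Sauer first. You then note that $M$ Hamming balls of radius $m/8$ cover at most a $2^{-0.1m}$ fraction of the cube, so almost every row costs more than $m/8$ edits. Your constants are fine: $\log_2(16\econst)/16+H(1/8)\approx0.34+0.54<1$. This is the structure of the proof of Theorem~\ref{thm:robust}, with ball volume in place of code distance.

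Your arrangement gives an explicit lower bound $\abs B>(1-2^{-0.1m})\,2^d m/8$ and avoids the factor $2$ lost to Markov. It shows that the distance from $\Stable_{m/16}(G)$ exceeds $(1-2^{-0.1m})/(32\sqrt s)$. That is larger than $\epsilon$ by a factor of order $\sqrt s$. The paper's contradiction argument needs only the single hypothesis $\delta\le1/16$.

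Two small points:
\begin{itemize}
\item At $s=16$ the quantities $2^d d/(8\sqrt s)$ and $2^d d/(2s)$ are equal, so ``exceeds'' should be ``is at least''. This is harmless because your lower bound on $\abs B$ is strict.
\item The tower step should be written out, for instance via $s\le t$ and $2^t\ge8t^{3/2}$ for $t\ge16$.
\end{itemize}
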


\begin{proof}
Take $c_0,F_0,m$, and $\xi_s$ from Lemma~\ref{lem:independent-fibre}.  On the
rectangle $F_0\times U$, the lower levels contribute the constant $c_0$.
Therefore
\begin{equation}\label{eq:threshold-rectangle}
  (w,u)\in S
  \quad\Longleftrightarrow\quad
  \inner{u}{\xi_s(w)}=0
  \qquad(w\in F_0,u\in U).
\end{equation}
Since the $m$ vectors $\xi_s(w)$, $w\in F_0$, are independent, the row traces
of $S$ on $F_0$ run through all $2^m$ binary patterns, each exactly
$2^{d-m}$ times.

Fix an arbitrary edit $B\subseteq G$ with $\abs B\le\epsilon\abs G$.
The rectangle $F_0\times U$ has size $m2^d$, whereas $G$ has size
$N2^d$.  Hence the relative edit density on this rectangle is at most
\[
  \delta:=\frac{\epsilon N}{m}.
\]
If $s\le64$, then $m=d/2$, and \eqref{eq:epsilon-s} gives
$\delta\le16\epsilon\le1/s\le1/16$.  If $s>64$, then
$m\ge2d/\sqrt{s}$, so again by \eqref{eq:epsilon-s},
$\delta\le4\epsilon\sqrt{s}\le1/(4\sqrt{s})<1/16$.  Thus
\begin{equation}\label{eq:local-edit}
  \delta\le\frac1{16}.
\end{equation}

The average number of edited positions in $F_0$ per row is at most
$\delta m$.  Markov's inequality therefore leaves at least half of the
$2^d$ rows with at most $2\delta m\le m/8$ edits.  Call these rows good.  Every original
trace has $2^{d-m}$ representatives, so at least $2^{m-1}$ distinct original
traces have a good representative.  Choose one good row for each such trace.
Every chosen original trace lies within Hamming distance $m/8$ of its
edited trace.  Therefore one edited trace can arise from at most
\[
  V:=\sum_{j=0}^{m/8}\binom mj
  \le(8\econst)^{m/8}
\]
of the chosen original traces.  Hence the edited relation has at least
$2^{m-1}/V$ distinct row traces on $F_0$.

Suppose that $S\mathbin\triangle B$ has no ladder of length $r:=m/16$.
Lemma~\ref{lem:trace-bound}, applied to the $m$ columns in $F_0$, bounds the
number of distinct edited traces by
\[
  \sum_{j=0}^{r-1}\binom mj\le(16\econst)^{m/16}.
\]
It follows that
\[
  2^{m-1}
  \le(8\econst)^{m/8}(16\econst)^{m/16}
  =2^m\left(\frac{\econst^3}{64}\right)^{m/16}
  <2^{m-1},
\]
because $m/16\ge1$ and $\econst^3/64<27/64<1/2$.  This contradiction proves
$\ldr(S\mathbin\triangle B)\ge m/16$.  Since $B$ was arbitrary,
$\ldrs_\epsilon(S)\ge m/16$.  The lower bound
$m/16\ge d/(8\sqrt{s})$ is Lemma~\ref{lem:independent-fibre}.
For the tower bound, put $t:=\twr(s-2)$.  Then $d\ge2^t$ and $s\le t$.
Moreover, $2^t\ge8t^{3/2}$ for $t\ge16$.  Indeed, the sequence
$2^t/t^{3/2}$ is increasing there, since the ratio of consecutive terms is
$2(t/(t+1))^{3/2}>1$, and the inequality holds at $t=16$.
Therefore
\[
  \frac{d}{8\sqrt{s}}\ge\frac{2^t}{8\sqrt t}\ge t=\twr(s-2).
\]
Proposition~\ref{prop:distance} gives the distance statement.  Finally, the
selected top map is admissible, so Proposition~\ref{prop:hlm-regularity}
gives the regularity index of $f$.
\end{proof}

\section{Consequences for periodic approximation and Green regularity}\label{sec:consequences}

The robust ladder index does not by itself lower-bound Green's Fourier
regularity index.  It does, however, obstruct a natural stronger form of
regularization: approximation by a Boolean function that is constant on
cosets.

\subsection{Periodic Boolean models}

A set $C\subseteq G$ is \emph{$H$-periodic} if it is a union of cosets of
$H\le G$.
\begin{definition}[periodic approximation index]
For $\eta\in[0,1]$, define
\[
  \peridx_\eta(A):=
  \min\bigl\{[G:H]:\text{some $H$-periodic $C$ satisfies }
  \abs{A\mathbin\triangle C}\le\eta\abs G\bigr\}.
\]
The minimum exists because every subset of $G$ is $\{0\}$-periodic.
\end{definition}

\begin{proposition}[robust ladders obstruct periodic approximation]
\label{prop:periodic}
For every $A\subseteq G$ and every $\eta\in[0,1]$,
\[
  \peridx_\eta(A)\ge\ldrs_\eta(A).
\]
More generally, let $0\le\alpha,\beta\le1$ with $\alpha+\beta\le1$.
Suppose that the union of the exceptional $H$-cosets has size at most
$\beta\abs G$, and that every other $H$-coset has $A$-density at most
$\alpha$ or at least $1-\alpha$.  Then
\[
  [G:H]\ge\ldrs_{\alpha+\beta}(A).
\]
\end{proposition}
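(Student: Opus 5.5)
The plan has two parts: show that an $H$-periodic set has ladder index at most $[G:H]$, then deduce both inequalities from the definition of $\ldrs$.

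\emph{Key lemma.} If $C$ is $H$-periodic, then $\ldr(C)\le[G:H]$. Suppose $a_1,\dots,a_k,b_1,\dots,b_k$ form a ladder in $C$ with $k>[G:H]$. By pigeonhole, two of the $a$'s lie in the same coset: $a_p-a_{p'}\in H$ for some $p<p'$. Periodicity gives $a_p+b_q\in C\iff a_{p'}+b_q\in C$ for every $q$, so rows $p$ and $p'$ of the half-graph would coincide. They do not: taking $q=p$ gives $a_p+b_p\in C$ but $a_{p'}+b_p\notin C$, since $p'>p$. Hence $k\le[G:H]$.

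\emph{First inequality.} Let $H$ attain $\peridx_\eta(A)$, with an $H$-periodic $C$ satisfying $\abs{A\mathbin\triangle C}\le\eta\abs G$. Put $B:=A\mathbin\triangle C$. Then $A\mathbin\triangle B=C$ and $\abs B\le\eta\abs G$, so by definition and the key lemma,
\[
\ldrs_\eta(A)\le\ldr(C)\le[G:H]=\peridx_\eta(A).
\]

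\emph{General statement.} Build an $H$-periodic model $C$ coset by coset:
\begin{itemize}
\item on an exceptional coset, put the whole coset in $C$ or leave it out (either choice works);
\item on a nonexceptional coset where $A$ has density at most $\alpha$, exclude it from $C$;
\item on a nonexceptional coset where $A$ has density at least $1-\alpha$, include it in $C$.
\end{itemize}
The two density conditions cannot both hold unless $\alpha\ge1/2$; in that case either choice works.

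Count the disagreements $A\mathbin\triangle C$. Exceptional cosets contribute at most $\beta\abs G$ in total. Each nonexceptional coset contributes at most $\alpha\abs H$, so these contribute at most $\alpha\abs G$ in total. Thus $\abs{A\mathbin\triangle C}\le(\alpha+\beta)\abs G$, and the same argument as above gives $[G:H]\ge\ldr(C)\ge\ldrs_{\alpha+\beta}(A)$. The hypothesis $\alpha+\beta\le1$ only ensures that $\eta=\alpha+\beta$ lies in the admissible range $[0,1]$.

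The only real content is the pigeonhole step in the key lemma. The rest is bookkeeping against Definition~\ref{def:robust-index}; there, the one point to check is that $C=A\mathbin\triangle B$ for an admissible edit $B$.
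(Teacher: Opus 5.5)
Your proof is correct and matches the paper's argument: the bound $\ldr(C)\le[G:H]$ for $H$-periodic $C$ comes from the fact that the left vertices of a ladder have distinct rows and hence lie in distinct $H$-cosets. You then treat $A\mathbin\triangle C$ as an admissible edit and round $A$ coset by coset toward its denser side, which gives the $(\alpha+\beta)\abs G$ disagreement count.
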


\begin{proof}
If $C$ is $H$-periodic, membership in $a+b\in C$ depends only on the two
classes of $a$ and $b$ in $G/H$.  The left vertices of a ladder have distinct
membership rows, hence distinct classes modulo $H$.  Therefore
$\ldr(C)\le[G:H]$.  If $\abs{A\mathbin\triangle C}\le\eta\abs G$, the edit
$A\mathbin\triangle C$ is admissible in the definition of
$\ldrs_\eta(A)$, so
\[
  \ldrs_\eta(A)\le\ldr(C)\le[G:H].
\]
Taking the minimum proves the first assertion.

For the second, on each nonexceptional coset choose the constant value that
agrees with the denser side of $A$; choose either value on exceptional
cosets.  The resulting $H$-periodic set differs from $A$ on at most an
$(\alpha+\beta)$-fraction of $G$.  Apply the first assertion.
\end{proof}

The two robust theorems now give tower-size obstructions to Boolean coset
models:
\begin{equation}\label{eq:periodic-applications}
  \peridx_{1/16}(A_s)\ge\frac d8,
  \qquad
  \peridx_\epsilon(S)\ge\frac{d}{8\sqrt{s}}.
\end{equation}
In particular, any subspace whose cosets approximate either set at the
stated accuracy must itself have tower-size index.

\subsection{Distance from stability and Fourier regularity}

We next combine the robust index with known coset-approximation theorems.
The translate family of a $k$-stable set has VC dimension at most $k-1$:
if it shattered $k$ points, rows realizing the $k$ suffix patterns would
form a ladder of length $k$.  Alon, Fox, and Zhao proved that, in a finite
abelian group of bounded exponent, every set whose translate family
has VC dimension at most $d_0$ can be edited on an $\alpha$-fraction of the
group into a union of cosets of a subgroup of index
$\alpha^{-d_0-o(1)}$~\cite{AFZ}.  Conant later obtained a bound
$\alpha^{-O_k(1)}$ for $k$-stable sets in arbitrary finite groups~\cite{Conant}.

For $k\ge2$ and $0<\alpha<1$, let $Q_k(\alpha)$ be any uniform bound with
the following property.  Every $k$-stable subset of every finite elementary
abelian $2$-group is within normalized edit distance $\alpha$ of a union of
cosets of a subspace of index at most $Q_k(\alpha)$.  The cited results show
that such a bound exists and is polynomial in $1/\alpha$ when $k$ is fixed.

\begin{proposition}[Green regularity cost forces distance from stability]
\label{prop:regularity-distance}
Let $A\subseteq\F_2^{\,n}$, let $k\ge2$, and let
$0<\alpha,\delta<1$ with $\alpha+\delta\le1$.  If
\[
  \operatorname{dist}_G\bigl(A,\Stable_k(G)\bigr)\le\delta,
\]
then
\[
  \reg_{\sqrt{\alpha+\delta}}(\mathbf1_A)\le Q_k(\alpha).
\]
In particular, for $0<\rho<1$,
\[
  \reg_\rho(\mathbf1_A)>Q_k(\rho^2/2)
  \quad\Longrightarrow\quad
  \operatorname{dist}_G\bigl(A,\Stable_k(G)\bigr)>\frac{\rho^2}{2}.
\]
\end{proposition}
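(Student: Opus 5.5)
Pick a $k$-stable set $C$ with $\abs{A\mathbin\triangle C}\le\delta\abs G$; it exists because the minimum in $\operatorname{dist}_G$ is attained. Apply the defining property of $Q_k(\alpha)$ to $C$. This gives a subspace $H$ with $[G:H]\le Q_k(\alpha)$ and an $H$-periodic set $P$ with $\abs{C\mathbin\triangle P}\le\alpha\abs G$. By the triangle inequality for symmetric differences, $\abs{A\mathbin\triangle P}\le(\alpha+\delta)\abs G$. So $A$ is within edit density $\eta:=\alpha+\delta$ of a union of $H$-cosets. It remains to show that such an $H$ is $\sqrt\eta$-regular for $\mathbf 1_A$. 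That gives $\reg_{\sqrt\eta}(\mathbf 1_A)\le2^{\codim H}=[G:H]\le Q_k(\alpha)$.

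\textbf{Regularity step.} Let $\eta_x$ be the fraction of the coset $x+H$ on which $\mathbf 1_A$ and $\mathbf 1_P$ disagree. The average of $\eta_x$ over cosets, weighted by size, is at most $\eta$. On a coset, $\mathbf 1_P$ is constant. Hence for $t\notin H^\perp$ the character $h\mapsto(-1)^{\inner th}$ is nontrivial on $H$, and $\widehat{(\mathbf 1_P)}_{x+H}(t)=0$. It follows that
\[
\abs{\widehat{(\mathbf 1_A)}_{x+H}(t)}\le\frac1{\abs H}\sum_{h\in H}\abs{\mathbf 1_A-\mathbf 1_P}(x+h)=\eta_x .
\]
Thus a coset is non-$\sqrt\eta$-uniform only if $\eta_x>\sqrt\eta$. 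By Markov's inequality, the points lying in such cosets make up a fraction less than $\eta/\sqrt\eta=\sqrt\eta$ of $G$. So at most $\sqrt\eta\abs G$ points are exceptional, and $H$ is $\sqrt\eta$-regular. (If $\eta=0$ the bound is trivially fine.) Since $\eta\le1$, the parameter $\sqrt\eta$ lies in $[0,1]$, as the definition requires.

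\textbf{The "in particular".} Take $\alpha=\delta=\rho^2/2$, so $\alpha+\delta=\rho^2\le1$ and $\sqrt{\alpha+\delta}=\rho$. The main statement then says that $\operatorname{dist}_G(A,\Stable_k(G))\le\rho^2/2$ implies $\reg_\rho(\mathbf 1_A)\le Q_k(\rho^2/2)$. The claim is the contrapositive. The only subtle point is the regularity step, where one must check two things: the bound on Fourier coefficients uses the $L^1$ disagreement on each coset, and the exceptional mass in Definition~\ref{def:reg} is measured in points, which matches the Markov count weighted by coset size. Everything else is bookkeeping.
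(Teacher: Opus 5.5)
Your proposal is correct and is essentially the paper's own proof: you pass to a nearby $k$-stable set, apply the defining property of $Q_k(\alpha)$ to get an $H$-periodic set within $\alpha+\delta$ of $A$, and then combine Markov's inequality on the per-coset disagreement with the vanishing of the periodic set's nontrivial Fourier coefficients to show that $H$ is $\sqrt{\alpha+\delta}$-regular. Your specialization $\alpha=\delta=\rho^2/2$ followed by the contrapositive also matches the paper exactly.
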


\begin{proof}
Choose a $k$-stable set $A'$ with
$\abs{A\mathbin\triangle A'}\le\delta\abs G$.  By the definition of
$Q_k(\alpha)$, there are a subspace $H$ with $[G:H]\le Q_k(\alpha)$ and an
$H$-periodic set $C$ such that
\[
  \abs{A'\mathbin\triangle C}\le\alpha\abs G.
\]
Thus $\abs{A\mathbin\triangle C}\le(\alpha+\delta)\abs G$.  Put
$\gamma:=\alpha+\delta$.  For each coset $x+H$, let $e(x+H)$ be the relative
edit density between $A$ and $C$ on that coset.  The average of $e$ over the
cosets is at most $\gamma$, so all but a $\sqrt\gamma$-fraction of the points
lie in cosets with $e\le\sqrt\gamma$.

On such a coset, $\mathbf1_C$ is constant.  Its Fourier coefficient at every
$t\notin H^\perp$ is therefore zero, while
\[
  \left|\widehat{\mathbf1_A}_{x+H}(t)
        -\widehat{\mathbf1_C}_{x+H}(t)\right|
  \le e(x+H)\le\sqrt\gamma.
\]
Hence $H$ is $\sqrt\gamma$-regular for $\mathbf1_A$, which proves the first
claim.  Set $\alpha=\delta=\rho^2/2$ and take the contrapositive to obtain the
second.
\end{proof}

The implication is one-way.  Distance from every $k$-stable set need not, by
itself, produce large Fourier coefficients on many cosets.  The exact
consequence of a robust ladder is Proposition~\ref{prop:periodic}: it rules
out low-index, nearly Boolean coset models.

\section{Discussion and open questions}\label{sec:remarks}

The two robust theorems settle two questions.  First,
$\ldrs_\epsilon(A_s)$ is not $O(\log(1/\epsilon))$:
Theorem~\ref{thm:robust} gives the lower bound $d_s/8$, even after edits of
fixed density $1/16$.  Second, Theorem~\ref{thm:witness} produces a binary
superlevel set of the averaged witness whose edit-robust ladder length is at
least $d_s/(8\sqrt{s})$.  Both lower bounds are towers in $1/\epsilon$.

Several quantitative questions remain.
\begin{enumerate}
\item Determine the correct order of $\ldrs_\eta(A_s)$ for fixed
  $0<\eta<1/8$.  Is it $\Theta(d_s)$?  The lower bound is linear in $d_s$,
  but ladders in the full Cayley relation need not be support-separated, so
  the argument of Remark~\ref{rem:exact} gives no matching upper bound.
\item Can the factor $\sqrt{s}$ in Theorem~\ref{thm:witness} be removed?
  The loss comes from choosing one level fibre of the binomial variable
  $c(w)$.  A construction that combines several fibres without losing a
  fixed threshold might yield a robust ladder of order $d_s$.
\item Proposition~\ref{prop:regularity-distance} says that expensive Fourier
  regularity forces distance from each fixed stable class.  Find useful
  hypotheses under which a converse holds, or construct sharp counterexamples.
\item Compare adversarial edit distance from $k$-stability with almost sure
  stability and stability modulo an ideal~\cite{CM,MPPW,Giron}.  Our parameter
  asks for a global correction; the other notions control the density or
  negligibility of local witnesses.
\end{enumerate}

\end{document}